\newcommand{\spsms}{\Delta^{op}\mathrm{Pre}(Sm/S)}
\newcommand{\spc}{\Delta^{op}\mathrm{Pre}({\mathcal C})}
\renewcommand{\H}{{\mathrm{H}}}
\newcommand{\SH}{{\mathrm{SH}}}
\newcommand{\colim}{{\mathrm{colim\,}}}
\newcommand{\hocolim}{{\mathrm{hocolim\,}}}
\newcommand{\Mor}{{\mathrm{Mor}}}
\newcommand{\Ho}{{\mathrm{Ho}}}
\newcommand{\Cyl}{{\mathrm{Cyl}}}
\newcommand{\U}{{\mathrm{U}}}
\newcommand{\N}{\mathbb{N}}
\newcommand{\G}{\mathbb{G}}
\newcommand{\A}{\mathbb{A}}
\newcommand{\HH}{{\mathrm{H}}}
\renewcommand{\G}{\mathbb{G}}
\newcommand{\Pa}{{\mathbb{P}}}
\newcommand{\Spc}{{\mathrm{Spc}}}
\newcommand{\Z}{{\mathbb{Z}}}
\newcommand{\Hom}{\underline{\mathrm{Hom}}}
\newcommand{\RHom}{{\mathrm R}\underline{\mathrm{Hom}}}
\newcommand{\map}{\mathrm{map}}
\newcommand{\Rmap}{\mathrm{Rmap}}
\newcommand{\Spec}{\mathrm{Spec}}
\newcommand{\Ch}{{\mathcal C}}
\newcommand{\Dh}{{\mathcal D}}
\newcommand{\Gh}{{\mathcal G}}
\newcommand{\Mh}{{\mathcal M}}
\newcommand{\Sh}{{\mathcal S}}
\newtheorem{theorem}{Theorem}
\newtheorem{lemma}[theorem]{Lemma}
\newtheorem{prop}[theorem]{Proposition}
\newtheorem{defn}[theorem]{Definition}
\begin{document}

\title[Brown representability in $\A^1$-homotopy theory]{Brown representability in $\A^1$-homotopy theory}

\author{Niko Naumann}
\email{niko.naumann@mathematik.uni-regensburg.de, markus.spitzweck@mathematik.uni-regensburg.de}
\address{NWF I- Mathematik\\ Universit\"at Regensburg\\93040 Regensburg}
\author{Markus Spitzweck}
\email{markus.spitzweck@mathematik.uni-regensburg.de}
\address{NWF I- Mathematik\\ Universit\"at Regensburg\\93040 Regensburg}

\classification{primary: 14F42, secondary: 18E30}

\keywords{Motivic homotopy theory, triangulated categories, Brown representability}

\begin{abstract} We prove the following result of V. Voevodsky. If $S$ 
is a finite dimensional noetherian scheme such that $S=\cup_\alpha\Spec(R_\alpha)$ for {\em countable} rings $R_\alpha$, then the
stable motivic homotopy category over $S$ satisfies Brown
representability.
\end{abstract}

\maketitle

\section{Introduction}\label{intro}

Let $S$ be a noetherian scheme of finite Krull dimension. F. Morel and V. Voevodsky
construct the stable motivic homotopy category $\SH(S)$ guided by the 
intuition that there is a homotopy theory of schemes in which the affine
line plays the role of the unit interval in classical homotopy theory.\\
It is important to know which of the familiar structural properties of
the classical homotopy category hold for $\SH(S)$. It is well known
that $\SH(S)$ is a compactly generated triangulated category but this is a
rather weak statement: Whereas the classical homotopy category admits 
the sphere spectrum as a single compact generator, $\SH(S)$ requires an infinite set 
of such generators.\\
A more subtle question is whether Brown representability
holds for $\SH(S)$, cf. \cite{Neeman} for a discussion of this notion and
the fact that the following main result implies Brown representability for
$\SH(S)$. Denote by $\SH(S)^c\subseteq \SH(S)$ the full subcategory
of compact objects.

\begin{theorem}\label{1}
If $Sm/S$, the category of smooth $S$-schemes of finite type, is countable,
then so is $\SH(S)^c$.
\end{theorem}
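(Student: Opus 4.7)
The plan is to exploit a combinatorial model for $\SH(S)$ --- for instance Jardine's stable motivic model structure on symmetric $T$-spectra of simplicial presheaves on $Sm/S$ --- in which a countable sub-universe of spectra can be isolated. Since $Sm/S$ is countable, the family $\mathcal G := \{\,\Sigma^{p,q}\Sigma^\infty_T X_+ \mid X \in Sm/S,\; p,q \in \Z\,\}$ is a countable set of compact generators of $\SH(S)$.

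By Neeman's structure theorem for compactly generated triangulated categories, $\SH(S)^c$ is the smallest thick subcategory of $\SH(S)$ containing $\mathcal G$, equivalently the idempotent completion inside $\SH(S)$ of the triangulated subcategory $\langle \mathcal G\rangle$ generated by $\mathcal G$. One builds $\langle \mathcal G\rangle$ iteratively: start from $\mathcal G$ and at each stage adjoin cones of all maps between the objects already produced. If every Hom group $[E,F]_{\SH(S)}$ between such objects is countable, only countably many new isomorphism classes appear at each stage, and the countable union $\langle \mathcal G\rangle$ remains countable; the subsequent idempotent completion contributes at most countably many further objects, since splittings are parametrized by idempotents in countable endomorphism rings. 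The theorem thus reduces to the bound that $[E,F]_{\SH(S)}$ is countable whenever $E,F$ are built from $\mathcal G$ by finitely many cones and shifts.

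To estimate these Hom groups I would work directly in the model category. Call a $T$-spectrum of simplicial presheaves \emph{countable} if the collection of its non-degenerate sections, taken over all $X \in Sm/S$ and all spectral degrees, is countable. Suspension spectra $\Sigma^\infty_T X_+$ are countable, and countability is preserved under finite shifts, smashes with $S^1$ and $\Gm$, and homotopy cofibres; hence every object of $\langle \mathcal G\rangle$ admits a countable cofibrant representative $E$. For such $E$ and a fibrant replacement $RF$ of a countable cofibrant $F$, one has $[E,F]_{\SH(S)} = \pi_0\mathrm{Map}(E,RF)$. If $RF$ itself can be chosen countable, the mapping space $\mathrm{Map}(E,RF)$ is a countable simplicial set and its $\pi_0$ is countable, which gives the required bound.

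The main obstacle is therefore to verify that every countable cofibrant spectrum admits a countable motivic fibrant replacement. This hinges on producing a countable set $J$ of generating trivial cofibrations, between countable objects, whose $J$-injectives are exactly the motivic fibrant spectra. The contributing families are the simplicial horn inclusions smashed with objects of $Sm/S$, the morphisms associated to Nisnevich hypercovers (for which one must show that it suffices to consider hypercovers built entirely from $Sm/S$, so that the relevant class is indeed countable), the $\A^1$-projections $\A^1 \times X \to X$, and the stabilization maps for the $T$-spectrum structure. Once such a countable $J$ is in hand, Quillen's small object argument run with $J$ produces countable fibrant replacements of countable inputs, and the abstract reduction of the second paragraph then closes the theorem.
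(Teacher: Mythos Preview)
Your high-level reduction matches the paper's: $\SH(S)^c$ is the thick hull of the countable set $\mathcal G$, so by the iterative cone construction it suffices to show that $\SH(S)(\Sigma^\infty_{\Pa^1} U_+, \Sigma^{p,q}\Sigma^\infty_{\Pa^1} V_+)$ is countable for all $U,V\in Sm/S$ and $p,q\in\Z$. The divergence is in how that bound is obtained. You propose to produce a countable \emph{stable} fibrant replacement directly, by exhibiting a countable set of generating trivial cofibrations for the stable motivic model structure. The paper instead works unstably: it takes the flasque model structure on pointed simplicial presheaves, localizes at the explicit countable set $\Sh$ consisting of elementary Nisnevich distinguished squares and $\A^1$-projections (not hypercovers), checks that this yields an almost finitely generated model, and from this extracts a sectionwise-countable unstable fibrant replacement functor. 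The stable Hom is then computed via the separate colimit formula $\SH(S)(\Sigma^\infty_{\Pa^1}(X,x),E)=\colim_n \H_\bullet(S)(\Sigma^n_{\Pa^1}(X,x),E_n)$, valid for $(X,x)$ of finite type, which reduces everything to the unstable bound.

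Your route is workable in principle, but the sketch leaves open precisely the points the paper's choices are designed to finesse. First, invoking Nisnevich hypercovers is a misstep: there is no obvious countable cofinal family of hypercovers, and the correct move (which you gesture at but do not make) is to use the Brown--Gersten characterization of Nisnevich-local objects via elementary distinguished squares, which is countable on the nose once $Sm/S$ is. Second, even with a countable localizing set in hand, the generating trivial cofibrations of a left Bousfield localization are not explicit; one does not get for free that the $J$-injectives are exactly the local fibrant objects. The paper sidesteps this by verifying directly that having the RLP against the explicit set $\Lambda(\Sh)\cup J$ already forces fibrancy in $L_\Sh\Mh$, using compactness of the (co)domains. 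Your stable version would need the analogous verification with the $\Omega$-spectrum stabilization maps adjoined, which is plausible but not supplied. The paper's two-step route---countable unstable fibrant replacement plus the colimit formula---buys a cleaner argument with no need to control stable fibrancy at all.
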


This result is due to V. Voevodsky \cite[Proposition 5.5]{voevodsky}.\footnote{
In {\em loc. cit.} V. Voevodsky works with the hypothesis that there
is a Zariski open cover $S=\cup_\alpha\Spec(R_\alpha)$ for countable
rings $R_\alpha$. Given that $S$ is noetherian, in particular quasi-compact,
it is easy to see that this hypothesis is equivalent to $Sm/S$ being countable.}
We obtain it here from some unstable results which are of independent interest and which we now sketch, giving an outline of this paper:\\
In Section \ref{afg} we show that, for general $S$, the homotopy category of 
pointed motivic spaces $\H_\bullet(S)$ over $S$
admits an almost finitely generated 
and monoidal model.
The proof of this gives us a controlled fibrant replacement 
functor which allows to 
show in Section \ref{unstable} the following unstable finiteness result.

\begin{theorem}\label{2}
If $Sm/S$ is countable, $F\in\spsms_\bullet$ is sectionwise countable
and $(X,x)\in\spsms_\bullet$ is of finite type, then 

\[ \H(S)_\bullet((X,x),F) \]

is countable.
\end{theorem}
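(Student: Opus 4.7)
The strategy is to import the question from $\H(S)_\bullet$ into the almost finitely generated monoidal model structure on $\spsms_\bullet$ provided by Section \ref{afg}, exploit its controlled fibrant replacement functor $L$, and finally evaluate a mapping space built out of only countably many pieces.

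\textbf{Step 1.} Verify that the fibrant replacement functor $L$ preserves sectionwise countability. Because $Sm/S$ is countable, the generating trivial cofibrations of the almost finitely generated model structure of Section \ref{afg} form a countable set with finitely presentable sources and targets. Its small object argument is therefore indexed by $\omega$: at each stage one attaches cells along maps out of finitely presentable objects into a sectionwise countable presheaf (of which there are only countably many), and one passes to a sequential colimit. Each of these operations preserves sectionwise countability.

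\textbf{Step 2.} Choose a cofibrant replacement $(X',x) \to (X,x)$, which we may take of finite type since $(X,x)$ is. Then
\[ \H(S)_\bullet((X,x),F) \;\cong\; \pi_0\,\map_\bullet\bigl((X',x), L(F)\bigr). \]

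\textbf{Step 3.} Because $(X',x)$ is of finite type, it is obtained from finitely many pointed representables $U_+$, $U\in Sm/S$, by finitely many pushouts and degeneracies. Hence $\map_\bullet((X',x), L(F))_n$ is a finite limit of sets of the form $L(F)(U)_n$, each countable by Step 1. Consequently this simplicial set has countably many simplices in every degree, and so its $\pi_0$ is countable.

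The hard part is Step 1: one must check carefully that the small object argument attached to the model structure of Section \ref{afg} terminates (in the almost finitely generated sense) at stage $\omega$, and that each single stage preserves sectionwise countability. This is precisely where the almost finitely generated property, as opposed to mere cofibrant generation, is indispensable; a transfinite iteration of uncountable length would destroy the countability estimate. Once this is in hand, Steps 2 and 3 are formal.
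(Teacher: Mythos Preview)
Your Step 1 is correct and is precisely Proposition \ref{fibrep} of the paper: the small object argument for $L_{\Sh}\Mh$ runs over $\omega$ and preserves sectionwise countability.  The gap is in Steps 2--3.

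In Step 2 you claim that a cofibrant replacement $(X',x)\to (X,x)$ may be chosen of finite type.  In the flasque model structure underlying $L_{\Sh}\Mh$ this is not justified: objects of $\Spc^{ft}$ are built from representables by iterated pushouts along \emph{monomorphisms}, but not every monomorphism is a flasque cofibration, so an object of $\Spc^{ft}_\bullet$ need not be cofibrant, and there is no evident finite-type cofibrant replacement.  Without cofibrancy the strict mapping space $\map_\bullet((X,x),L(F))$ does not compute $\Rmap_\bullet((X,x),F)$, and the finite-limit computation in Step 3 no longer yields the homotopy classes you want.  Switching to the injective structure does not help: there every object is cofibrant and the pushouts defining $\Spc^{ft}$ are homotopy pushouts, but the injective fibrant replacement is \emph{not} the controlled one of Step 1, so sectionwise countability of the target is lost.

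The paper resolves this tension by arguing homotopically rather than strictly.  It first proves that the class of $(X,x)\in\H_\bullet(S)$ with all $\pi_n\Rmap_\bullet((X,x),F)$ countable is closed under homotopy pushouts; this amounts to showing that a homotopy pullback of simplicial sets with countable homotopy sets again has countable homotopy sets, which is handled via minimal Kan complexes and minimal fibrations.  It then uses the Quillen equivalence with the injective structure to see that every $(X,x)\in\Spc^{ft}_\bullet$ is built from pointed representables by finitely many homotopy pushouts, reducing to the case $(X,x)=(U,u)$ with $U\in Sm/S$, where your Step 1 and Yoneda finish the job.  Your strict finite-limit shortcut would indeed avoid the minimal-Kan argument, but only if the cofibrancy claim in Step 2 held, and in $L_{\Sh}\Mh$ it does not.
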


\newpage

Section \ref{stable} starts by establishing
\cite[Theorem 5.2]{voevodsky}:

\begin{theorem}\label{3} If $(X,x)\in\spsms_\bullet$ is of finite type and $E=(E_n)_{n\ge 0}$
is a $\Pa^1$-spectrum, then

\[ \SH(S)(\Sigma^\infty_{\Pa^1}(X,x),E)=\colim_n\,  \H_\bullet(S)((\Pa^1,\infty)^{\wedge n}\wedge (X,x),E_n).\]
\end{theorem}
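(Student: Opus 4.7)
The strategy is to construct a tractable stably fibrant replacement of $E$ as a sequential colimit, and then to exploit that $(X,x)$ is of finite type so that $\H_\bullet(S)((X,x),-)$ commutes with this colimit.

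\emph{Step 1 (Stably fibrant replacement).} Using the almost finitely generated monoidal model of $\spsms_\bullet$ from Section~\ref{afg}, let $L$ denote its pointed levelwise fibrant replacement functor. For a $\Pa^1$-spectrum $E=(E_n)_{n\ge 0}$ I would set
\[
(Q^{k}E)_n := \Omega^{k}_{\Pa^1} L(E_{n+k}),
\]
with structure maps induced from those of $E$, and form the sequential diagram of canonical maps $Q^{k}E \to Q^{k+1}E$. Setting $E^{f}:=\colim_k Q^{k}E$, I would verify that $E^{f}$ is levelwise fibrant, that the adjoints $E^{f}_n \to \Omega_{\Pa^1} E^{f}_{n+1}$ of its structure maps are weak equivalences, and that the canonical map $E\to E^f$ is a stable equivalence. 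The key input is that in the almost finitely generated model, $\Omega_{\Pa^1}$ commutes with the sequential colimits at hand on fibrant objects, which is what makes $E^{f}$ into an $\Omega_{\Pa^1}$-spectrum.

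\emph{Step 2 (Reduction to the $0$-th level).} Since $\Sigma^{\infty}_{\Pa^1}(X,x)$ is cofibrant and $E^{f}$ is stably fibrant, stable homotopy classes are computed levelwise, and the shift/adjunction gives
\[
\SH(S)(\Sigma^{\infty}_{\Pa^1}(X,x),E) \;=\; \H_\bullet(S)\bigl((X,x), E^{f}_0\bigr) \;=\; \H_\bullet(S)\bigl((X,x),\, \colim_k \Omega^{k}_{\Pa^1} L(E_k)\bigr).
\]

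\emph{Step 3 (Commuting hom with the colimit).} Because $(X,x)$ is of finite type, the almost finite generation of the model structure ensures that the functor $\H_\bullet(S)((X,x),-)$ commutes with this sequential colimit. Combined with the $(\Sigma_{\Pa^1},\Omega_{\Pa^1})$-adjunction
\[
\H_\bullet(S)\bigl((X,x), \Omega^{k}_{\Pa^1} L(E_k)\bigr) \;=\; \H_\bullet(S)\bigl((\Pa^1,\infty)^{\wedge k}\wedge (X,x),\, E_k\bigr),
\]
this yields the claimed formula after re-indexing.

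\emph{Main obstacle.} The technical crux is Step~3: one must justify that hom out of a finite-type object commutes with the sequential colimit appearing in $E^{f}_0$. This uses the almost finitely generated structure from Section~\ref{afg} in an essential way, together with the fact that the transitions $Q^{k}E \to Q^{k+1}E$ can be arranged to be sufficiently well-behaved (e.g.\ levelwise cofibrations up to equivalence). The combinatorial bookkeeping in Step~1 --- in particular, checking that $E^{f}$ is genuinely an $\Omega_{\Pa^1}$-spectrum on fibrant objects --- is the other non-trivial piece, and the controlled fibrant replacement of Section~\ref{afg} is what makes it available.
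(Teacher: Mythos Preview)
Your outline is correct and is essentially a hand-unpacked version of the paper's proof, differing mainly in packaging. The paper first reduces to the case $(X,x)=(U,u)$ with $U\in Sm/S$ representable, by observing that both sides of the claimed identity send homotopy push-outs in the $(X,x)$-variable to homotopy pull-backs (using that $\Sigma^\infty_{\Pa^1}$ and $\Sigma^n_{\Pa^1}$ preserve homotopy push-outs and that filtered homotopy colimits of simplicial sets commute with finite homotopy limits). It then invokes \cite[Corollary~4.13]{hovey} as a black box, after checking its hypotheses for $\Dh=L_\Sh\Mh$ and $T=(\Pa^1,\infty)\wedge-$ via Proposition~\ref{afgen}: $(U,u)\wedge S^m$ is compact and cofibrant with compact cylinder, and $\Omega_{\Pa^1}$ preserves filtered colimits. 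Your Steps~1--2 reproduce Hovey's argument --- your $E^{f}$ is precisely his $\Theta^\infty$ applied to a level-fibrant replacement --- while your Step~3 attempts to treat the finite-type hypothesis directly rather than reducing to representables first.

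One caution on Step~3: the assertion that $\H_\bullet(S)((X,x),-)$ commutes with the sequential colimit is not a formal consequence of almost finite generation alone; it needs $(X,x)$ to be weakly equivalent to a \emph{cofibrant compact} object of $L_\Sh\Mh$. Since flasque cofibrations form a proper subclass of monomorphisms, a general finite-type object need not be flasque-cofibrant on the nose. The clean way to justify this is exactly the homotopy push-out reduction to representables that the paper performs up front --- so your Step~3, once made precise, absorbs the paper's first step rather than bypassing it. The trade-off is that the paper's route is shorter by outsourcing the colimit bookkeeping to Hovey, while yours makes the stabilization mechanism explicit.
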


Then we give the proof of Theorem \ref{1}.\\
We conclude the Introduction with a general remark concerning Brown representability
for $SH(S)$. As anticipated in \cite{voevodsky}, it is much more difficult to apply than its
classical counterpart. This is essentially because, given a ``cohomology theory''
on $Sm/S$ (of which there are many interesting examples), it is generally
difficult to extend it to $\SH(S)^c$, a minimum requirement for Brown representability to apply.\\
However, in recent joint work with P. A. {\O}stv{\ae}r \cite{nmp}, we constructed many
new motivic (ring) spectra, using the full strength of Theorem \ref{1}, and this was our initial
motivation for documenting its proof.

\begin{acknowledgements} 
Theorems \ref{1} and \ref{3} are due to V. Voevodsky but his proofs remain unpublished. In 
finding the present proofs via the above unstable results, we have been guided by
\cite[Section 4]{hovey},\cite{blander} and \cite{ppr}.

\end{acknowledgements}

\section{An almost finitely generated model for motivic spaces}\label{afg}

Let $S$ be a noetherian scheme of finite Krull dimension. The homotopy category of pointed
motivic spaces over $S$, denoted $\H_\bullet(S)$ \cite[Section 3.2]{mv}, is the homotopy
category of the category $\spsms_\bullet$ of pointed 
simplicial presheaves on the category $Sm/S$
of smooth $S$-schemes of finite type with respect to a suitable model structure.
The purpose of this Section is to show that this model structure can be chosen to
be monoidal and almost finitely generated, cf. \cite[4.1]{hovey}. \\
To set the stage, denote by $\Mh$ the model category $\spsms_\bullet$ 
with the objectwise
flasque model structure \cite[Theorem 3.7,a)]{isaksen}. We will show

\begin{prop}\label{afgen}
There exists a set $\Sh\subseteq\Mor(\Mh)$ such that the left Bousfield localization
$L_\Sh\Mh$ of $\Mh$ with respect to $\Sh$ exists, is a proper, cellular, simplicial, monoidal 
and almost finitely generated model category and satisfies $\Ho(L_\Sh\Mh)\simeq\H_\bullet(S)$.
\end{prop}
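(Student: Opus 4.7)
The strategy is to start from the pointed flasque model structure $\Mh$ on $\spsms_\bullet$, which by \cite[Theorem~3.7(a)]{isaksen} is already proper, cellular, simplicial and monoidal, and to produce $L_\Sh\Mh$ by a single left Bousfield localization at a carefully chosen set $\Sh$ of maps between \emph{finite} objects (i.e.\ objects built from finitely many representables). Everything then reduces to choosing $\Sh$ so that (i) the resulting homotopy category is $\H_\bullet(S)$, (ii) monoidality survives, (iii) right properness survives, and (iv) the localization is almost finitely generated.

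First I would define $\Sh$ as the union of two families. For $\A^1$-invariance include, for each $X\in Sm/S$, a cofibrant model of the pointed projection $(X\times\A^1)_+ \to X_+$. For Nisnevich descent include, for each elementary Nisnevich distinguished square
\[
\xymatrix{
U\times_X V \ar[r]\ar[d] & V \ar[d] \\
U \ar[r] & X
}
\]
in $Sm/S$, a cofibrant model of the canonical map $U_+\cup^{h}_{(U\times_X V)_+}V_+\to X_+$. Since elementary distinguished squares generate the Nisnevich topology on the noetherian finite-dimensional $S$ by Morel--Voevodsky, inverting $\Sh$ together with the $\A^1$-projections reproduces $\H_\bullet(S)$. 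Hirschhorn's theorem then delivers the localization together with left properness, cellularity, and the simplicial structure. Right properness is obtained as usual from the stalkwise fibre-sequence argument of Morel--Voevodsky/Jardine, using that all maps in $\Sh$ live in $Sm/S$.

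For monoidality I would invoke the standard reduction: cofibrations are unchanged by the localization, so the only pushout-product to verify is a cofibration against a new generating trivial cofibration, and by Hovey's criterion it suffices to show that for every $s\in\Sh$ and every representable $X_+\in\Mh$ the smash $s\wedge X_+$ is an $\Sh$-local equivalence. For an $\A^1$-projection $s$ associated to $Y$, $s\wedge X_+$ is, up to isomorphism, the $\A^1$-projection for $Y\times X$, hence lies in $\Sh$. For a Nisnevich-square map $s$ associated to a square $Q$, $s\wedge X_+$ is the analogous map for the base-change $Q\times X$, which is still an elementary Nisnevich square because smooth base change preserves such squares; again the result lies in $\Sh$.

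The main obstacle is almost finite generation. The flasque generating cofibrations $I$ have finite domain and codomain, so these work unchanged in $L_\Sh\Mh$. For the trivial cofibrations, adapting the strategy of \cite[\S4]{hovey}, \cite{blander} and \cite{ppr}, I would take the union of the flasque generating trivial cofibrations $J$ together with all pushout-products $(\partial\Delta^n_+\hookrightarrow\Delta^n_+)\Box\tilde{s}$, where $n\ge 0$ and $\tilde{s}$ ranges over the cofibrant mapping-cylinder presentations of the elements of $\Sh$; each such pushout-product has finite domain and codomain because both factors do. The point to verify, and the hardest step, is that a map whose target is $\Sh$-locally fibrant is a fibration in $L_\Sh\Mh$ exactly when it has the right lifting property against this finite-ended set. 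This in turn amounts to producing a controlled $\Sh$-local fibrant replacement functor built entirely from pushouts along and sequential colimits of maps in that small set, so that each transfinite step still involves only finitely many cells at a time. Constructing this bounded fibrant replacement, exploiting that both $\A^1$ and every elementary Nisnevich square are assembled from finitely many pieces of finite type, is the technical heart of the proof, and it is precisely the tool that will be recycled in Section~\ref{unstable} to deduce the countability estimate of Theorem~\ref{2}.
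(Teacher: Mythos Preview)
Your overall architecture matches the paper's: same base model $\Mh$, same two families of localizing maps (Nisnevich squares and $\A^1$-collapses), and the same reliance on Hirschhorn for existence/left properness/cellularity/simpliciality. Two points deserve comment.

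\textbf{Almost finite generation.} You present this as ``the hardest step'' and propose to build a bounded fibrant replacement by hand. In the paper this is a one-line citation: since $\Mh$ is proper, cellular and \emph{finitely generated}, and every map in $\Sh$ is a cofibration between compact objects, \cite[Proposition~4.2]{hovey} immediately yields that $L_\Sh\Mh$ is almost finitely generated. No new argument is needed here; the work you describe is exactly the content of Hovey's proposition. (The controlled fibrant replacement you allude to does reappear later, in Proposition~\ref{fibrep}, but there the point is a \emph{countability} estimate on the output, which is genuinely extra and not part of the present proposition.)

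\textbf{Monoidality.} Your route and the paper's diverge. The paper does not verify $s\wedge X_+\in\Sh$ case by case; instead it observes that the $\A^1$-local \emph{injective} model structure is already monoidal (smashing with any pointed simplicial presheaf preserves $\A^1$-weak equivalences, \cite{morelictp}), and since flasque cofibrations are injective cofibrations while the weak equivalences coincide, the pushout-product axiom transfers for free. Your direct argument can be made to work, but as written the reduction ``it suffices to check $s\wedge X_+$ for \emph{representable} $X$'' is not justified: the generating cofibrations of the flasque structure \cite[Definition~3.2]{isaksen} have domains and codomains built from finite diagrams of representables, not single representables, so you would still need to explain why the class of $A$ with $s\wedge A$ an $\Sh$-local equivalence is closed under the relevant finite colimits. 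This is fixable, but the paper's transfer from the injective structure sidesteps it entirely.
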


\begin{proof}
By \cite[Section 5]{isaksen} and \cite[Theorem A.3.11]{ppr}, for a suitable choice 
of $\Sh$ to be recalled presently, $L_\Sh\Mh$ exists, is proper, cellular and simplicial
and satisfies $\Ho(L_\Sh\Mh)\simeq\H_\bullet(S)$.\\
By \cite[Theorem 4.9]{isaksen}, we can choose

\[ \Sh=\{ \Cyl((U \amalg_{U\times_X V} V\to X)_+)\}_\alpha \cup \{ U_+\stackrel{0}{\to}
(U\times_S\A^1_S)_+\, |\, U\in Sm/S\}\]

where $0$ indicates the zero section and $\alpha$ runs through all elementary
distinguished squares \cite[Definition 3.3]{mv}

\[ \alpha\, =\, \xymatrix{Z\ar[r]^j\ar[d]& V\ar[d]\\U\ar[r]& X.}\]

We use \cite[Proposition 4.2]{hovey} to see that $L_\Sh \Mh$ is
almost finitely generated: $\Mh$ itself is proper and cellular by 
\cite[Theorem 3.7,a)]{isaksen} and finitely generated by inspection 
of the generating (trivial) cofibrations of $\Mh$, given in
\cite[Definition 3.2]{isaksen}. 
Actually, while {\em loc. cit.} claims ``cellular'', it only proves ``cofibrantly
generated'', so let us quickly explain why the additional properties
\cite[Definition 12.1.1,(1)-(3)]{hirsch} are true, i.e. $\Mh$ is cellular:
$(1)$ and $(2)$ are implied by $\Mh$ being finitely generated and $(3)$ says that
all cofibrations in $\Mh$ are effective monomorphisms. By \cite[Lemma 3.8]{isaksen}, every cofibration of $\Mh$ is an injective cofibration, i.e. a monomorphism
, and it is easy to see that all monomorphisms in $\Mh$ are effective.\\
Since $\Sh$ consists of cofibrations with compact\footnote{To avoid confusion arising from conflicting
terminology in the literature, we make precise that we call an object $c$
of a category $\Ch$ compact, if for all filtering colimits $\colim_i c_i$ which 
exist in $\Ch$, the canonical map of sets $\Ch(c,\colim_i c_i)\to \colim_i\, \Ch(c,c_i)$ is bijective.}
domains and codomains, $L_\Sh\Mh$ is almost finitely generated by \cite[Proposition 4.2]{hovey}.\\
To see that $L_\Sh\Mh$ is monoidal requires some argument, cf.
\cite[Section 6]{isaksen}: First, the $\A^1$-local {\em injective}
model structure on $\spsms_\bullet$ is monoidal since smashing with every
pointed simplicial presheaf preserves $\A^1$-weak equivalences 
\cite[page 27]{morelictp}. Also, $\Mh$ itself is monoidal by \cite[Proposition
3.14]{isaksen} (for the unpointed variant of $\Mh$)
and \cite[Proposition 4.2.9]{hoveybook} (for the passage from the
unpointed case to $\Mh$). Now, let $i$ and $j$
be cofibrations in $L_\Sh\Mh$. Since $L_\Sh\Mh$ and $\Mh$ have the same
cofibrations, the push-out product $i\wedge j$ is a cofibration in $L_\Sh\Mh$.
Assume in addition that one of $i$ and $j$ is acyclic, i.e. an $\A^1$-
weak equivalence. Then so is $i\wedge j$ by the above reminder on the injective
structure.
\end{proof}

\section{Unstable results}\label{unstable}

We employ the following notions of finiteness: A set $X$ is countable if
there is an injective map $X\to\N$. A simplicial set $X$ is countable if
the disjoint union $\cup_{n\ge 0} X_n$ is countable. A presheaf of
(pointed) simplicial sets on a category $\Ch$ is sectionwise countable if for all
$U\in\Ch$, the (pointed) simplicial set $X(U)$ is countable. A category $\Ch$
is countable if it is equivalent to a category $\Ch'$ such that the disjoint
union $\cup_{c_1,c_2\in\Ch'}\, \Ch'(c_1,c_2)$ is countable.\\

We will need the following application of the small object argument.

\begin{prop}\label{smallobj} Let $\Ch$ be a category, $\spc_\bullet$ the category
of pointed simplicial presheaves on $\Ch$ and $I\subseteq\Mor(\spc_\bullet)$ a subset such that
\begin{itemize}
\item[i)] $I$ is countable.
\item[ii)] For every $\alpha\in I$, the domain $d(\alpha)\in\spc_\bullet$ of $\alpha$
is compact.
\item[iii)] For every $\alpha\in I$ and $G\in\spc_\bullet$ sectionwise countable, the set $\spc_\bullet(d(\alpha),G)$ is countable.
\item[iv)] For every $\alpha\in I$ and $U\in\Ch$, the set $\spc_\bullet(U_+,c(\alpha))$
is countable, where $c(\alpha)$ denotes the codomain of $\alpha$.
\end{itemize}

Then every map $F\to G$ in $\spc_\bullet$ can be functorially factored into
$F\stackrel{\iota}{\to}F'\stackrel{\pi}{\to} G$ such that
\begin{itemize}
\item[a)] $\iota$ is a relative $I$-cell complex \cite[Definition 10.5.8, (1)]{hirsch}.
\item[b)] $\pi$ has the right lifting property with respect to $I$.
\item[c)] If $G=\bullet$ is the final object and $F$ is sectionwise countable,
then $F'$ is sectionwise countable.
\end{itemize}
\end{prop}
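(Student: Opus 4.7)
The plan is to apply Quillen's small object argument, which should terminate after $\omega$ steps thanks to the compactness hypothesis (ii). I would start from $F_0 := F$ with structure map $\pi_0 : F \to G$, and inductively define $F_{n+1}$ by the pushout
\[ \xymatrix{ \coprod_{T_n} d(\alpha) \ar[r] \ar[d] & \coprod_{T_n} c(\alpha) \ar[d] \\ F_n \ar[r] & F_{n+1}, } \]
where $T_n$ is the set of pairs $(\alpha, (s, t))$ with $\alpha \in I$ and $(s, t)$ a commutative square from $\alpha$ to the current structure map $\pi_n : F_n \to G$. The induced bottom maps extend $\pi_n$ to $\pi_{n+1} : F_{n+1} \to G$. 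One then sets $F' := \colim_n F_n$ with induced $\iota : F \to F'$ and $\pi : F' \to G$. Functoriality of the entire construction in $(F \to G)$ is manifest.

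Assertion (a) then holds by construction, as each transition $F_n \to F_{n+1}$ is a pushout of a coproduct of elements of $I$. For (b), given a lifting problem against $\pi$ for some $\alpha \in I$, its top map $d(\alpha) \to F'$ will factor through $F_n$ for some $n$, by compactness (ii) and because $F' = \colim_n F_n$ is filtered. This factorization together with the bottom map $c(\alpha) \to G$ gives an element of $T_n$, so the attached copy of $c(\alpha) \to F_{n+1} \to F'$ provides the needed lift.

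For (c), assume $G = \bullet$ and $F$ sectionwise countable. I would show by induction on $n$ that each $F_n$ is sectionwise countable. The indexing set $T_n$ is countable, since by (i) the set $I$ is countable and, for each $\alpha \in I$, by (iii) combined with the inductive hypothesis the set $\spc_\bullet(d(\alpha), F_n)$ is countable (and $t$ is forced because $G = \bullet$). Applying (iv) to each $c(\alpha)$ to control its sections, the sectionwise pushout
\[ F_{n+1}(U) = F_n(U) \cup_{\coprod_{T_n} d(\alpha)(U)} \coprod_{T_n} c(\alpha)(U) \]
is a pushout of countable pointed simplicial sets, hence itself countable. Passing to the $\omega$-colimit preserves countability, so $F'(U)$ is countable for every $U$.

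The main obstacle I anticipate is the bookkeeping in the inductive step of (c): one must combine (i), (iii), and (iv) to ensure that a single stage of the small object construction does not inflate cardinality, in particular verifying that the pushout behaves well on every simplicial level of the sections of $c(\alpha)$ as controlled by (iv). Beyond that, the argument is the standard transfinite small object machinery specialized to the countable ordinal $\omega$.
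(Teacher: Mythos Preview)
Your proposal is correct and follows essentially the same route as the paper: run the small object argument over the ordinal $\omega$ (using compactness (ii) so that no longer transfinite composition is needed), obtain (a) and (b) formally, and for (c) show by induction that each stage $F_n$ stays sectionwise countable using (i) and (iii) to bound the index set and (iv) to bound the sections of the attached codomains. The only cosmetic difference is that the paper first cites Hirschhorn's small object argument for the general factorization and then reruns the construction explicitly in the case $G=\bullet$, whereas you do both at once.
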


\begin{proof} The small object argument \cite[Proposition 10.5.16]{hirsch} applies by $ii)$,
yielding a functorial factorization satisfying $a)$ and $b)$.\\
To see $c)$, we assume $F$ sectionwise countable and run through this 
argument in some detail:\\
We construct

\[ F=F_0\to F_1\to\ldots\to F_n\to F_{n+1}\to\ldots\]

by induction on $n\ge 0$ such that all $F_n$ are sectionwise countable
as follows: Consider the set $\Dh$ of all commutative squares

\[ \xymatrix{ a\ar[r]\ar[d]^f & F_n \ar[d]\\ b\ar[r] & \bullet } \]

with $f\in I$. Then $\Dh$ is countable by $i),iii)$. Define $F_{n+1}$ to
be the push-out

\[ \xymatrix{ \amalg_\Dh\, a \ar[r]\ar[d] & F_n\ar[d] \\ \amalg_\Dh\, 
b \ar[r] & F_{n+1}.} \]

Then $F_{n+1}$ is sectionwise countable by $iv)$.\\
Now let $F\stackrel{\iota}{\to} F':=\colim_n F_n\stackrel{\pi}{\to}\bullet$
be the canonical maps. These satisfy $a)$ trivially and $b)$ by $ii)$ (i.e. we do not need
any longer transfinite compositions). Clearly, $F'$ is sectionwise countable since all the $F_n$ are.
\end{proof}

When coupled with the work from Section \ref{afg}, this yields our following
key technical finiteness result.

\begin{prop}\label{fibrep} Let $S$ be a noetherian scheme of finite Krull dimension such that $Sm/S$ is countable and $F\in\spsms_\bullet$ sectionwise countable.
Then there is a trivial cofibration

\[ \xymatrix{F\,  \ar@{^(->}[r]^{\sim} & F'} \]

in $L_\Sh\Mh$ such that $F'$ is fibrant and sectionwise countable.
\end{prop}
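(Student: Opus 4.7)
The strategy is to combine the almost finitely generated structure on $L_\Sh\Mh$ furnished by Proposition \ref{afgen} with the controlled small object argument of Proposition \ref{smallobj}. Recall that almost finite generation provides a set $J'$ of generating trivial cofibrations whose domains and codomains are compact and such that any morphism with fibrant target is a fibration precisely when it has the right lifting property with respect to $J'$. The plan is to apply Proposition \ref{smallobj} with $I := J'$ to the canonical map $F \to \bullet$, producing a factorization $F \stackrel{\iota}{\to} F' \stackrel{\pi}{\to} \bullet$ in which $F'$ is sectionwise countable.

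I choose $J'$ carefully so that conditions i)-iv) of Proposition \ref{smallobj} are met. Following the proof of \cite[Proposition 4.2]{hovey}, one takes $J'$ to consist of the generating trivial cofibrations of $\Mh$ listed in \cite[Definition 3.2]{isaksen} together with the horns built from the elements of $\Sh$. Each element of $J'$ is then a cofibration between pointed simplicial presheaves built from finitely many pushouts and wedges of objects of the form $U'_+ \wedge L$, for $U' \in Sm/S$ and $L$ a finite pointed simplicial set. Since $Sm/S$ and $\Sh$ are countable, only countably many such maps arise, giving i); compactness ii) is part of almost finite generation. For iii), $\spc_\bullet(U'_+ \wedge L, G) = \Hom_{sSet_\bullet}(L, G(U'))$ is countable whenever $L$ is finite and $G(U')$ is countable, and this extends to the finite colimits involved. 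For iv), $\spc_\bullet(U_+, U'_+ \wedge L) = \Hom_{Sm/S}(U, U')_+ \wedge L_0$ is countable because $Sm/S$ is countable and $L_0$ is finite.

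Proposition \ref{smallobj} then yields the desired factorization with $F'$ sectionwise countable and $\pi$ having RLP against $J'$. The map $\iota$ is obtained as a transfinite composition of pushouts of elements of $J'$, each of which is a trivial cofibration in $L_\Sh\Mh$; since trivial cofibrations are closed under these operations, $\iota$ is a trivial cofibration. The terminal object $\bullet$ is fibrant, so by the characterization of fibrations with fibrant target in an almost finitely generated model category, $\pi$ is a fibration; equivalently, $F'$ is fibrant, as required.

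The main technical point is verifying that the ``finite'' generating trivial cofibrations afforded by almost finite generation can be arranged to have the explicit ``representable-times-finite-simplicial-set'' shape needed for conditions iii) and iv). This reduces to an inspection of Isaksen's description of the generating trivial cofibrations of the flasque model structure and of the horn construction used in Bousfield localization, combined with the countability of $Sm/S$.
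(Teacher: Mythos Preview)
Your proof is correct and follows essentially the same route as the paper: your set $J' = J \cup \Lambda(\Sh)$ is exactly the paper's $I$, and the verification of conditions i)--iv) of Proposition~\ref{smallobj} proceeds by the same inspection of Isaksen's generating maps and the horns on $\Sh$. The only cosmetic difference is in the last step, where you invoke the almost-finitely-generated characterization of fibrations with fibrant target to deduce that $F'$ is fibrant, while the paper unpacks this by checking $\Mh$-fibrancy and $\Sh$-locality separately via \cite[Proposition~4.2.4]{hirsch}.
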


\begin{proof} We apply Proposition \ref{smallobj} with $\Ch:=Sm/S$ and
$I:=\Lambda(\Sh)\cup J$ where $J$ is the set of generating trivial
cofibrations for $\Mh$ given in \cite[Definition 3.2,1)]{isaksen}, $\Sh$ is as in the proof of Proposition \ref{afgen} and

\[ \Lambda(\Sh):=\{ a\otimes\Delta^n \amalg_{a\otimes\partial\Delta^n}
b\otimes\partial\Delta^n\to b\otimes\Delta^n\, |\, (a\to b)\in\Sh, n\ge 0\}.\]

We check the assumptions $i)-iv)$ of Proposition \ref{smallobj}
for this set $I$:\\
$i)$ $I$ is countable since $Sm/S$ is.\\
$ii)$ We already know that all domains of $\Sh$ are compact, hence so are those
of $\Lambda(\Sh)$. Obviously the domains of $J$ are compact, so all domains
of $I$ are compact.\\
$iii)$ Assume $G\in\spsms_\bullet$ sectionwise countable and $\alpha\in I$.
To see that $T:=\spsms_\bullet(d(\alpha),G)$ is countable, we distinguish two cases:
If $\alpha\in J$, then 

\[ d(\alpha)=(\cup{\mathcal U}_+\wedge \Delta^n_+)\amalg _{\cup{\mathcal U}_+\wedge \Lambda^n_{k,+}}(X_+\wedge \Lambda^n_{k,+})\]

for a finite collection ${\mathcal U}=\{ U_i\to X\}_{U_i,X\in Sm/S}$ of monomorphisms.
Now, $T$ is countable since $G$ is sectionwise countable. If $\alpha\in\Lambda(\Sh)$, then 
$d(\alpha)=a\otimes\Delta^n \amalg_{a\otimes\partial\Delta^n}
b\otimes\partial\Delta^n$ for some $(a\to b)\in \Sh$, and it suffices 
to see that $\spsms_\bullet(a,G)$ and $\spsms_\bullet(b,G)$ are countable. Since $a$ and $b$
arise from representable presheaves by taking finite colimits and tensors with finite simplicial sets, this follows again from $G$ being sectionwise countable.\\
$iv)$ Assume $\alpha\in I$ with codomain $c(\alpha)$ and $U\in Sm/S$. We need
to see that $T:=\spsms_\bullet(U_+,c(\alpha))$ is countable: For $\alpha\in J$
we have $c(\alpha)=X_+\wedge\Delta^n_+$ for some $X\in Sm/s,n\ge 0$, 
hence the result since $Sm/S$ is countable.
For $\alpha\in\Lambda(\Sh)$ we have $c(\alpha)=b\otimes\Delta^n$ for some
$b=c(\beta),\beta\in\Sh$, hence $T=(b(U)\wedge\Delta^n_+)_0$. By construction of 
$\Sh$, $b$ is a finite push-out of tensors of finite simplicial sets with 
representables, so $T$ is countable since $Sm/S$ is.\\
Applying now Proposition \ref{smallobj},c), we obtain maps $F\stackrel{\iota}{\to}F'\stackrel{\pi}{\to}\bullet$ in $\spsms_\bullet$
such that $F'$ is sectionwise countable, $\iota$ is a relative $I$-cell
complex and $\pi$ has the right-lifting property with respect to $I$.\\
We need to check that $\iota$ (resp. $\pi$) is a trivial cofibration
(resp. a fibration) in $L_\Sh\Mh$: $J$ is a generating set of trivial cofibrations
for $\Mh$ and since $\Sh$ consists of cofibrations with cofibrant domain
in $\Mh$, $\Lambda(\Sh)$ consists of trivial cofibrations in $L_\Sh\Mh$.
Hence every relative $I$-cell complex, in particular $\iota$,
is a trivial cofibration in $L_\Sh\Mh$.
Since $\pi$ has the right-lifting property with respect to $J$,
$F'$ is fibrant in $\Mh$. Since $\pi$ has the right-lifting property with
respect to $I=\Lambda(\Sh)\cup J$, $F'\in\Mh$ is $S$-local \cite[Proposition 4.2.4]{hirsch}.
So $F'$ is fibrant in $L_\Sh\Mh$ by \cite[Proposition 3.3.16, (1)]{hirsch}.
\end{proof}

The previous result admits the following immediate stable analogue.

\begin{prop}\label{stablefibrep}
Let $S$ be a noetherian scheme of finite Krull dimension 
such that $Sm/S$ is countable and $E=(E_n)$ a $\Pa^1$-spectrum
\cite{jardine} such that all $E_n$ are sectionwise countable. Then 
there is a level-fibrant replacement $E'=(E'_n)$ of $E$ such that 
all $E'_n$ are sectionwise countable. 
\end{prop}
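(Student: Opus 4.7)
The plan is to apply Proposition \ref{fibrep} once per level and then produce the structure maps via the lifting axiom of $L_\Sh\Mh$. Specifically, for each $n\ge 0$, Proposition \ref{fibrep} yields a trivial cofibration $\iota_n:E_n\hookrightarrow E'_n$ in $L_\Sh\Mh$ with $E'_n$ fibrant and sectionwise countable; this handles the levelwise data. What remains is to upgrade $(E'_n)$ to a $\Pa^1$-spectrum compatibly with $E$, i.e.\ to construct structure maps $\sigma'_n:\Pa^1\wedge E'_n\to E'_{n+1}$ extending the composites $\Pa^1\wedge E_n\stackrel{\sigma_n}{\to} E_{n+1}\stackrel{\iota_{n+1}}{\to} E'_{n+1}$, where $\sigma_n$ denotes the bonding map of $E$.

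To this end I would invoke the monoidal structure on $L_\Sh\Mh$ from Proposition \ref{afgen} together with cofibrancy of $\Pa^1$ in $\Mh$ to conclude that smashing with $\Pa^1$ is a left Quillen endofunctor of $L_\Sh\Mh$. Hence $\Pa^1\wedge\iota_n:\Pa^1\wedge E_n\to\Pa^1\wedge E'_n$ is a trivial cofibration, and since $E'_{n+1}$ is fibrant, the lifting axiom applied to
\[ \xymatrix{ \Pa^1\wedge E_n\ar[r]^-{\iota_{n+1}\circ\sigma_n}\ar[d]_{\Pa^1\wedge\iota_n} & E'_{n+1}\ar[d] \\ \Pa^1\wedge E'_n\ar[r]\ar@{-->}[ur]_-{\sigma'_n} & \bullet } \]
produces the desired $\sigma'_n$. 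By construction the square with vertices $\Pa^1\wedge E_n$, $E_{n+1}$, $\Pa^1\wedge E'_n$, $E'_{n+1}$ (maps $\sigma_n$, $\sigma'_n$, $\iota_{n+1}$, $\Pa^1\wedge\iota_n$) commutes, so $(E'_n,\sigma'_n)_{n\ge 0}$ is a $\Pa^1$-spectrum and $(\iota_n):E\to E'$ is a morphism of $\Pa^1$-spectra which is levelwise a trivial cofibration into a fibrant, sectionwise countable object, as required.

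The only verification of substance is that $\Pa^1\wedge-$ preserves trivial cofibrations in $L_\Sh\Mh$, which reduces to $\Pa^1$ being cofibrant in $\Mh$ (it is representable) and to the monoidal axiom supplied by Proposition \ref{afgen}. Both are in hand, so no further ingredients are needed; the argument is genuinely an immediate stable analogue of Proposition \ref{fibrep}, with the levelwise fibrant replacements chosen independently and the spectrum structure supplied \emph{a posteriori} by lifting.
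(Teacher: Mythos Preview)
Your argument is correct and reaches the same conclusion by a somewhat different route than the paper. The paper builds $E'$ inductively: it sets $E'_0:=E_0^f$ and then $E'_{n+1}:=(E'_n\wedge\Pa^1\cup_{E_n\wedge\Pa^1}E_{n+1})^f$, so that the structure map $E'_n\wedge\Pa^1\to E'_{n+1}$ is the canonical pushout inclusion followed by the fibrant replacement, and the level equivalence $E_{n+1}\to E'_{n+1}$ is automatic because it is a pushout of the trivial cofibration $\iota_n\wedge\Pa^1$ composed with $(-)^f$. You instead fibrant-replace each level independently and manufacture the bonding maps afterwards via the lifting axiom. Your version avoids the pushout and the check that it stays sectionwise countable; the paper's version is functorial, whereas your liftings involve choices. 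Both approaches ultimately rest on the same nontrivial input, namely that $(\Pa^1,\infty)\wedge-$ preserves trivial cofibrations in $L_\Sh\Mh$.

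One small sharpening: your parenthetical ``it is representable'' does not quite justify cofibrancy of $(\Pa^1,\infty)$ in the \emph{pointed} flasque structure $\Mh$. What is needed is that the basepoint $\infty:S\hookrightarrow\Pa^1$ is a cofibration in the unpointed flasque structure; this holds because it is one of Isaksen's generating cofibrations (take $X=\Pa^1$, $n=0$, and the singleton family $\mathcal{U}=\{\infty:S\to\Pa^1\}$ of monomorphisms in \cite[Definition~3.2]{isaksen}). With that in hand, the monoidal axiom from Proposition~\ref{afgen} gives exactly what you claim.
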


\begin{proof} One constructs $E'_n$ and structure maps
$E'_n\wedge \Pa^1 \to E'_{n+1}$ inductively: $E'_0:=E_0^f$, where
$(-)^f$ denotes the fibrant replacement provided by Proposition
\ref{fibrep}, and for all $n\ge 0$

\[ E'_{n+1}:=(E'_n\wedge\Pa^1\cup_{E_n\wedge\Pa^1} E_{n+1}\wedge\Pa^1)^f.\]

The evident maps $E'_n\wedge\Pa^1\to E'_{n+1}$ define a $\Pa^1$-spectrum $E'$
and the obvious map $E\to E'$ is a level-equivalence, $E'$ is level
fibrant and all $E'_n$ are sectionwise countable by an inductive application of
Proposition \ref{fibrep}.
\end{proof}

We recall (the simplicial variant of) the following definition
from \cite{voevodsky}.

\begin{defn}\label{ft} 
Let $S$ be a noetherian scheme of finite Krull dimension.
\begin{itemize}
\item[i)] The category of {\em motivic spaces of finite type over $S$}
is the smallest strictly full subcategory $\Spc^{ft}\subseteq\spsms$
such that:
\begin{itemize}
\item[a)] $(Sm/S)\subseteq\Spc^{ft}$.
\item[b)] For all push-outs

\[ \xymatrix{ A\ar@{^{(}->}[r]^i \ar[d] & B \ar[d] \\ C\ar[r] & D}\]

in $\spsms$ such that $A,B,C\in\Spc^{ft}$ and $i$ is a monomorphism,
we have $D\in\Spc^{ft}$.
\end{itemize}
\item[ii)] We denote by $\Spc^{ft}_\bullet\subseteq\spsms_\bullet$
the strictly full subcategory of objects $(X,x)$ such that $X\in\Spc^{ft}$.
\end{itemize}
\end{defn}

We denote by $\map_\bullet$ (resp. $\Hom_\bullet$) the mapping spaces 
(respectively internal homs) of the simplicial monoidal model
category $L_\Sh\Mh$ and their derived analogues by $\Rmap$
(resp. $\RHom_\bullet$).

\begin{theorem}\label{finsect} Let $S$ be a noetherian scheme of finite Krull
 dimension such that $Sm/S$ is countable, $F\in\spsms_\bullet$ sectionwise
 countable and
$(X,x)\in\Spc^{ft}_\bullet$. Then, 
for all $n\ge 0$, \[ \pi_n\, \Rmap_\bullet((X,x),F)\] is countable.
\end{theorem}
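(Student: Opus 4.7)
The plan is to reduce the computation of $\pi_n$ to the countability of each simplicial level of a mapping space, which is then handled by structural induction on Definition \ref{ft}.

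First, apply Proposition \ref{fibrep} to $F$ to produce a trivial cofibration $F \hookrightarrow F'$ in $L_\Sh\Mh$ with $F'$ fibrant and still sectionwise countable. Every object of $\Spc^{ft}_\bullet$ is cofibrant in $L_\Sh\Mh$: this is checked by induction on Definition \ref{ft}, using that representables are cofibrant in $\Mh$ and that the pushouts along monomorphisms appearing in the definition can be exhibited as pushouts along flasque cofibrations between cofibrant objects. Since $L_\Sh\Mh$ is a simplicial model category, one obtains a weak equivalence $\Rmap_\bullet((X,x), F) \simeq \map_\bullet((X,x), F')$. It therefore suffices to show that $\map_\bullet((X,x), F')$ is a countable simplicial set; for then each $\pi_n$, being a quotient of a subset of the $n$-simplices, is countable.

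The combinatorial heart of the argument is the following lemma: for every sectionwise countable $G \in \spsms_\bullet$ and every $(Y,y) \in \Spc^{ft}_\bullet$, the hom set $\spsms_\bullet((Y,y), G)$ is countable. Forgetting basepoints reduces this to showing $\spsms(Y, G)$ is countable for every $Y \in \Spc^{ft}$ and every sectionwise countable $G$. We induct on Definition \ref{ft}(i): the base case $Y \in Sm/S$ follows from the Yoneda identity $\spsms(Y, G) = G(Y)_0$, which is countable by the sectionwise hypothesis; and for a pushout $D = B \amalg_A C$, the universal property gives an inclusion $\spsms(D, G) \subseteq \spsms(B, G) \times \spsms(C, G)$, which is countable by the induction hypothesis.

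To apply the lemma to $\map_\bullet((X,x), F')_k$, use the simplicial enrichment
\[ \map_\bullet((X,x), F')_k = \spsms_\bullet((X,x) \wedge \Delta^k_+, F') = \spsms_\bullet((X,x), \Hom_\bullet(\Delta^k_+, F')). \]
The internal hom $\Hom_\bullet(\Delta^k_+, F')$ is sectionwise countable, because at $U \in Sm/S$ its value is the pointed function complex $F'(U)^{\Delta^k_+}$, countable since $F'(U)$ is countable and $\Delta^k_+$ is finite. The key lemma now yields the countability of $\map_\bullet((X,x), F')_k$ for every $k$, finishing the proof. The principal technical obstacle is the cofibrancy assertion for $\Spc^{ft}_\bullet$ in the flasque structure, which requires a careful check that the monomorphisms of Definition \ref{ft} may be realised as flasque cofibrations; should this prove delicate, an alternative is to track the argument through a functorial, sectionwise countable cofibrant replacement of $(X,x)$, at the cost of some bookkeeping.
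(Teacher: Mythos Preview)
Your combinatorial lemma is correct and the overall strategy is attractive, but there is a genuine gap at the step where you claim $\Rmap_\bullet((X,x),F)\simeq\map_\bullet((X,x),F')$. This requires $(X,x)$ to be cofibrant in $L_\Sh\Mh$, i.e.\ in the flasque structure, and the inductive argument you sketch does not establish this: flasque cofibrations form a proper subclass of monomorphisms \cite[Lemma~3.8]{isaksen}, so a monomorphism $A\hookrightarrow B$ between finite-type objects need not be a flasque cofibration, and hence the pushout $D=B\amalg_A C$ need not be flasque cofibrant even when $A,B,C$ are. Your fallback of passing to a sectionwise countable cofibrant replacement $Q(X,x)$ does not rescue the argument either: such a $Q(X,x)$ is produced by the small object argument and will typically fail to lie in $\Spc^{ft}_\bullet$, so your combinatorial lemma no longer applies to it; and the lemma is genuinely false for merely sectionwise countable sources (a countable product of countable sets is uncountable).

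The paper circumvents this by changing the induction: rather than proving countability of the mapping space itself, it shows that the class of $(X,x)$ with countable $\pi_n\Rmap_\bullet((X,x),F)$ is closed under homotopy pushouts. The pushouts in Definition~\ref{ft} are along monomorphisms, hence are homotopy pushouts in the \emph{injective} $\A^1$-structure (where everything is cofibrant), and the identity is a Quillen equivalence between the two structures. Closure under homotopy pushouts amounts to showing that a homotopy pullback of simplicial sets with countable homotopy has countable homotopy, which the paper handles via minimal Kan complexes and fibrations. This reduces to the case $(X,x)=(U,u)$ with $U\in Sm/S$, where flasque cofibrancy \emph{is} available and one computes $\Rmap_\bullet((U,u),F)=\map_\bullet((U,u),F')\subseteq F'(U)$ directly. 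Your approach would yield the stronger conclusion that the derived mapping space is itself countable, but as it stands the bridge from $\Rmap$ to $\map$ is missing.
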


\begin{proof} We first show that the class

\[ \{ (X,x)\in\H_\bullet(S) \, |\, \forall n\ge 0: \pi_n\, \Rmap((X,x),F)
\mbox{ is countable }\}\subseteq\H_\bullet(S)\]

is stable under homotopy push-outs.
For this, it suffices to see that if

\[ \xymatrix{ X\ar[r]\ar[d] & X_1\ar[d]^{f_1 }\\ X_3\ar[r]^{f_3} &X_2} \]

is a homotopy pull-back of simplicial sets such that all homotopy sets of 
$X_1,X_2$ and $X_3$ are countable, then so are those of $X$.
We can assume that all $X_i$ are Kan complexes and $f_1,f_3$ are Kan
fibrations and now use some basic results about minimal Kan complexes/fibrations
\cite[Sections 9 and 10]{may}: There is a minimal Kan complex $Y_2\subseteq
X_2$ which is a deformation retract of $X_2$
and we can replace the pull-backs
of $f_i$ to $Y_2$ with minimal Kan fibrations $g_i:Y_i\to Y_2$ ($i=1,3$).
Then all $Y_i$ are minimal Kan complexes and by minimality and
our assumption about their homotopy, they are countable. Then $X$ is weakly
equivalent to the countable Kan complex $Y_1\times_{Y_2} Y_3$.\\ 

Since the identity is a simplicial (left) Quillen equivalence from $L_\Sh\Mh$
to $\spsms_\bullet$ with the $\A^1$-local injective structure, every 
$(X,x)\in\Spc^{ft}_\bullet$ can be obtained from representables by finitely many
homotopy push-outs. We may thus assume that $(X,x)=(U,u)$
for some $U\in Sm/S$. For a sectionwise countable fibrant 
replacement $F'$ of $F$ in $L_\Sh\Mh$ as in Proposition \ref{fibrep}, 
we then see that 

\[ \Rmap_\bullet((U,u),F)=\map_\bullet ((U,u),F')\subseteq\map(U,F')\simeq F'(U)\]

is a countable Kan complex.
\end{proof}

\section{Stable results}\label{stable}

Let $S$ be a noetherian scheme of finite Krull dimension and $\SH(S)$
the homotopy category of $\Pa^1$-spectra over $S$ \cite{jardine}. 
We first establish the following result \cite[Theorem 5.2]{voevodsky}.

\begin{theorem}\label{maps}
If $(X,x)\in\Spc^{ft}_\bullet$ and $E=(E_n)_{n\ge 0}$
is a $\Pa^1$-spectrum, then 

\[ \Rmap_{\SH(S)}(\Sigma^\infty_{\Pa^1}(X,x),E)\simeq\hocolim_n\Rmap_{\H_\bullet(S)}(\Sigma^n_{\Pa^1}(X,x),E_n).\]

In particular,

\[ \SH(S)(\Sigma^\infty_{\Pa^1}(X,x),E)=\colim_n\, \H_\bullet(S)(\Sigma^n_{\Pa^1}
(X,x), E_n).\]

\end{theorem}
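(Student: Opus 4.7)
The strategy is to produce an explicit stably fibrant replacement of $E$ and then exploit the compactness of finite-type objects, leveraging the almost finitely generated model structure from Proposition~\ref{afgen}. Recall that $E = (E_n)$ is stably fibrant exactly when each $E_n$ is fibrant in $L_\Sh\Mh$ and the adjoints $E_n \to \Omega_{\Pa^1} E_{n+1}$ of the structure maps are weak equivalences; so the task is to build an $\Omega_{\Pa^1}$-spectrum equipped with a stable equivalence from $E$.

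First I would lift $L_\Sh\Mh$ to a level model structure on $\Pa^1$-spectra and take a level-fibrant replacement $E \to E^{(0)}$. Define $\Theta$ on level-fibrant spectra by $(\Theta F)_n := \Omega_{\Pa^1} F_{n+1}$, with the natural transformation $F \to \Theta F$ induced by the adjoints of the structure maps of $F$, and set
\[ E' := \hocolim_k \Theta^k E^{(0)}. \]
Because $\Pa^1$ is compact in $L_\Sh\Mh$---a consequence of Proposition~\ref{afgen}---the functor $\Omega_{\Pa^1}$ commutes with this sequential homotopy colimit, making $E'$ a level-fibrant $\Omega_{\Pa^1}$-spectrum. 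The canonical map $E \to E'$ is then a stable weak equivalence by construction, so this is the desired stable fibrant replacement; the argument parallels Hovey's stabilization procedure \cite[\S4]{hovey}.

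Since $\Sigma^\infty_{\Pa^1}(X,x)$ is cofibrant, the derived adjunction $\Sigma^\infty_{\Pa^1} \dashv \Omega^\infty_{\Pa^1}$ yields
\[ \Rmap_{\SH(S)}(\Sigma^\infty_{\Pa^1}(X,x), E) \simeq \Rmap_{\H_\bullet(S)}\bigl((X,x),\, \hocolim_k \Omega^k_{\Pa^1} E^{(0)}_k\bigr). \]
It remains to pull $\Rmap_{\H_\bullet(S)}((X,x),-)$ inside the sequential $\hocolim$. By Definition~\ref{ft}, $(X,x) \in \Spc^{ft}_\bullet$ is built from representables by finitely many homotopy pushouts along monomorphisms, and since finite homotopy limits of simplicial sets commute with filtered homotopy colimits, a dévissage similar to the opening of the proof of Theorem~\ref{finsect} reduces the commutation to the case $(X,x) = (U,u)$ with $U \in Sm/S$, where it follows from the compactness of representables in $L_\Sh\Mh$. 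Applying the adjunctions $\Sigma_{\Pa^1} \dashv \Omega_{\Pa^1}$ at each level rewrites the right-hand side as $\hocolim_k \Rmap_{\H_\bullet(S)}(\Sigma^k_{\Pa^1}(X,x), E_k)$, which gives the first formula; the second follows by passing to $\pi_0$, since $\pi_0$ commutes with filtered colimits of simplicial sets.

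The main obstacle will be the two compactness claims: that $\Pa^1$ is compact in $L_\Sh\Mh$ in the precise sense needed for $E'$ to be an $\Omega_{\Pa^1}$-spectrum, and that finite-type $(X,x)$ is compact enough for mapping out of it to commute with the relevant sequential $\hocolim$. Both ultimately rest on the almost finitely generated structure isolated in Section~\ref{afg}: in the ordinary $\A^1$-local model structure the fibrant replacement does not commute with sequential colimits, which is exactly why the refined model of Proposition~\ref{afgen} was introduced.
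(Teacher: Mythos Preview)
Your approach is essentially the paper's: both rest on the almost finitely generated structure of Proposition~\ref{afgen}, reduce to representables $(U,u)$ via the d\'evissage from the proof of Theorem~\ref{finsect}, and then use Hovey's $\Theta^\infty$-stabilization to compute the stable mapping space. The only difference is packaging: the paper verifies the hypotheses of \cite[Corollary 4.13]{hovey} and cites it as a black box, whereas you have unrolled that corollary by hand; one step you leave slightly implicit is that level-fibrancy of $E'=\hocolim_k\Theta^kE^{(0)}$ (as opposed to its being an $\Omega_{\Pa^1}$-spectrum) does not follow from compactness of $\Pa^1$ alone but from the fact that in an almost finitely generated model category fibrant objects are closed under sequential colimits.
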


\begin{proof} 
Using an argument very similar to the first part of the proof of 
Theorem \ref{finsect} powered by the facts that $\Sigma^\infty_{\Pa^1}$
and $\Sigma^n_{\Pa^1}$ preserve homotopy push-outs and that filtered
(homotopy) colimits commute with finite (homotopy) limits in simplicial sets,
we can assume that $(X,x)=(U,u)$ for some $U\in Sm/S$.\\
We now check the hypothesis of \cite[Corollary 4.13]{hovey}:
${\mathcal D}:=L_\Sh\Mh$ is (left) proper, cellular and almost finitely generated
, $T:=(\Pa^1,\infty)\wedge -:{\mathcal D}\to{\mathcal D}$ is left Quillen
since ${\mathcal D}$ is monoidal and the right-adjoint of $T$ is $U=\Omega_{\Pa^1}$
and preserves filtered colimits. 
For every $m\ge 0$, $A:=(U,u)\wedge S^m\in{\mathcal D}$ is compact and cofibrant
with compact cylinder object $A\wedge \Delta^1_+$. Finally, let $Y$ denote a 
level fibrant replacement of $E$, then 

\[ \pi_m\Rmap_{\SH(S)}(\Sigma^\infty_{\Pa^1}(U,u),E)=\SH(S)(\Sigma^\infty_{\Pa^1}(A),Y)=\]

\[ \colim_n \H_\bullet(S)(\Sigma^n_{\Pa^1}(A),Y_n)=\pi_m(\hocolim_n\Rmap_{\H_\bullet(S)}(\Sigma^n_{\Pa^1}(U,u),E_n)).\]

Here, the second equality is \cite[Corollary 4.13]{hovey} and we used
\cite[Theorem A.5.6]{ppr} to know that $\Ho(Sp^\N({\mathcal D},T))=
\SH(S)$.
\end{proof}

Let $s_-$ denote the shift functor for $\Pa^1$-spectra, i.e. $(s_-(E))_n=E_{n+1}$.
We need the following observation about its homotopical 
properties.

\begin{lemma}\label{dersmash}
Let $S$ be a noetherian scheme of finite Krull dimension and $E=(E_n)$ 
a $\Pa^1$-spectrum. Then

\[ \mathbb{L}((\Pa^1,\infty)\wedge -)(E)\simeq s_-(E)\mbox{ in }\SH(S).\]

\end{lemma}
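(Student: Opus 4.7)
The plan is to exhibit a natural map of $\Pa^1$-spectra $\phi_E : (\Pa^1,\infty) \wedge E \to s_-(E)$ whose component at level $n$ is the structure map $\sigma_n : (\Pa^1,\infty) \wedge E_n \to E_{n+1}$ of $E$, and then show that $\phi_E$ is a stable equivalence. That the $\sigma_n$ assemble into a morphism of spectra is a short diagram chase comparing the structure maps of $(\Pa^1,\infty) \wedge E$ (which are $(\Pa^1,\infty) \wedge \sigma_n$) with those of $s_-(E)$ (which are $\sigma_{n+1}$). Passing to a cofibrant replacement $\widetilde E \to E$ in the stable model structure on $\Pa^1$-spectra, and using that $s_-$ preserves stable equivalences (an easy check on bigraded stable homotopy sheaves), the problem reduces to proving that $\phi_{\widetilde E}$ is a stable equivalence.

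To verify this, I would test $\phi_{\widetilde E}$ on the compact generators $\Sigma^\infty_{\Pa^1}(X,x)$ of $\SH(S)$ for $(X,x) \in \Spc^{ft}_\bullet$. By Theorem \ref{maps} applied to both $(\Pa^1,\infty) \wedge \widetilde E$ and $s_-(\widetilde E)$, the induced map becomes
\[
\colim_n\,\H_\bullet(S)\bigl(\Sigma^n_{\Pa^1}(X,x),(\Pa^1,\infty) \wedge \widetilde E_n\bigr) \longrightarrow \colim_n\,\H_\bullet(S)\bigl(\Sigma^n_{\Pa^1}(X,x),\widetilde E_{n+1}\bigr),
\]
given at level $n$ by post-composition with $\sigma_n$. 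For surjectivity, a class $[g : \Sigma^n_{\Pa^1}(X,x) \to \widetilde E_{n+1}]$ on the right is hit by the class at level $n+1$ of $(\Pa^1,\infty) \wedge g$, whose image under $\phi_*$ is $\sigma_{n+1} \circ ((\Pa^1,\infty) \wedge g)$, which is precisely the transition of $g$ in the target system. For injectivity, if $\sigma_n \circ f$ becomes null-homotopic after $N$ transitions on the right, a direct calculation with iterated structure maps shows that the $(N{+}1)$-fold transition of $f$ on the left equals $(\Pa^1,\infty)$ smashed with this null map, hence is itself null.

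The main obstacle is the injectivity step, which requires carefully comparing the two distinct transition systems: the source colimit uses structure maps of $(\Pa^1,\infty) \wedge \widetilde E$ of the form $(\Pa^1,\infty) \wedge \sigma_n$, while the target uses those of $s_-(\widetilde E)$, namely $\sigma_{n+1}$, and one has to keep track of where $\Pa^1 \wedge (-)$ is applied relative to the structure maps of $\widetilde E$. Once the bijection is established for every $(X,x) \in \Spc^{ft}_\bullet$, naturality of $\phi_E$ in $E$ (the identity $\phi_E \wedge S^{p,q} = \phi_{E \wedge S^{p,q}}$) extends the conclusion to all shifts, so $\phi_{\widetilde E}$ is an isomorphism in $\SH(S)$ by compact generation, which gives the claimed identification $\mathbb{L}((\Pa^1,\infty) \wedge -)(E) \simeq s_-(E)$.
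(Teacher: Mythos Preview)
Your argument is correct but follows a genuinely different route from the paper. The paper does not build an explicit comparison map and test it on generators; instead it first invokes the identification $\mathbb{L}((\Pa^1,\infty)\wedge -)\simeq\mathbb{R}s_-$ coming from Hovey's general theory of $T$-spectra, and then shows that $s_-$ needs no right deriving: for a level-fibrant replacement $E\to E'$ one checks $s_-\Theta^\infty E'\simeq\Theta^\infty s_-E'$ (because $s_-$ and Hovey's $\Theta$ commute by inspection), and since $j_{s_-E'}$ is a stable equivalence and $s_-$ preserves level equivalences, $\mathbb{R}s_-(E)\simeq s_-(E)$. Your approach is more explicit and self-contained---it feeds Theorem~\ref{maps} back into the problem and avoids Hovey's $\Theta^\infty$ machinery---at the price of the transition-map bookkeeping you flag. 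Two small refinements: the step ``$s_-$ preserves stable equivalences'' is immediate here without invoking homotopy sheaves, since the cofibrant replacement $\widetilde E\to E$ is a stable trivial fibration, hence a level equivalence (trivial fibrations are unchanged by Bousfield localization), which $s_-$ visibly preserves; and your injectivity computation (the identity ``$(k{+}1)$-fold transition of $f$ in the source $=\ (\Pa^1,\infty)\wedge(k$-fold transition of $\sigma_n\circ f$ in the target)'') as well as the naturality $\phi_{E}\wedge A=\phi_{E\wedge A}$ are sensitive to whether the structure maps of $(\Pa^1,\infty)\wedge E$ involve a symmetry $\tau$---with the usual convention they hold on the nose, but it would strengthen the write-up to make this explicit. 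The paper's argument sidesteps these convention issues entirely.
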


\begin{proof}
We use somewhat freely results and notations from 
\cite[Sections 3 and 4]{hovey}. We know that $\mathbb{L}((\Pa^1,\infty)\wedge -)
\simeq\mathbb{R}s_-$. Construct

\[ \xymatrix{ E\ar[r]^\iota& E'\ar[r]^{j_{E'}} & \Theta^\infty E',}\]

where $\iota$ is a level-fibrant replacement and $j_{E'}$ and $\Theta^\infty$
are as in {\em loc. cit.}\\
Then $\Theta^\infty E'$ is stably fibrant, hence

\[ \mathbb{R}s_-(E)\simeq s_-\Theta^\infty E'\simeq \Theta^\infty s_- E',\]

since $s_-\circ\Theta\simeq\Theta\circ s_-$ by direct inspection.
On the other hand, $s_-(\iota):s_-E\to s_-E'$ is a level-equivalence and
thus a stable equivalence, and $j_{s_-E'}:s_-E'\to \Theta^\infty s_-E'$ was 
shown to be a stable weak equivalence in {\em loc. cit.} Combining, we
see that $\mathbb{R}s_-(E)\simeq s_-E$, as desired.
\end{proof}

To resume our work on Brown representability, recall that

\[ \Gh:=\left\{ \Sigma^{p,q}\Sigma^\infty_{\Pa^1} U_+\, |\, U\in Sm/S, p,q\in\Z \right\}\subseteq\SH(S)\]

is a set of compact generators, where $\Sigma^{p+q,q}:=S^p\wedge\G_m^q\wedge (-)$
(for $p,q\ge 0$). 
Denoting by $SH(S)^c\subseteq SH(S)$
the full subcategory of compact objects, we first deduce the stable analogue
of Theorem \ref{finsect}.

\begin{theorem}\label{stablefinsect}
Let $S$ be a noetherian scheme of finite Krull dimension
such that $Sm/S$ is countable, $F\in\SH(S)^c$ and
$E=(E_n)$ a $\Pa^1$-spectrum such that for all $n\ge 0$, $E_n$ is sectionwise
countable. Then, for all $p,q\in\Z$, 

\[ E^{p,q}(F):=\SH(S)(F,\Sigma^{p,q}E)\]

is countable.
\end{theorem}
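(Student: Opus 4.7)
The plan is to reduce to the generators of $\SH(S)^c$ and then apply Theorems~\ref{maps} and~\ref{finsect}. First I would introduce the full subcategory
\[
\mathcal{T}:=\bigl\{F\in\SH(S)\mid E^{p,q}(F)\text{ is countable for all }p,q\in\Z\bigr\}.
\]
Closure of $\mathcal{T}$ under retracts is formal, and for each fixed $(p,q)$ the long exact $E^{*,*}$-sequence associated to a distinguished triangle exhibits $E^{p,q}$ of the middle term as an extension of countable abelian groups, hence as a countable group; thus $\mathcal{T}$ is a thick subcategory of $\SH(S)$. By the standard characterization of compact objects in a compactly generated triangulated category (Neeman), $\SH(S)^c$ is the thick subcategory generated by $\Gh$, so it suffices to show $\Gh\subseteq\mathcal{T}$. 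As $\mathcal{T}$ is visibly stable under the bigraded suspensions $\Sigma^{p',q'}$, this reduces further to showing $\Sigma^\infty_{\Pa^1}(U_+)\in\mathcal{T}$ for every $U\in Sm/S$.

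Next I would use invertibility of $\Pa^1$ and $S^1$ in $\SH(S)$ to normalize the target bigrading. Smashing both arguments of $\SH(S)(\Sigma^\infty_{\Pa^1}(U_+),\Sigma^{p,q}E)$ with $(\Pa^1)^N\wedge S^M$ for $N,M\gg 0$ replaces the source by $\Sigma^\infty_{\Pa^1}(Y)$ with $Y:=(\Pa^1)^N\wedge S^M\wedge U_+$ still in $\Spc^{ft}_\bullet$ (using that $\Spc^{ft}_\bullet$ is closed under smashing with $(\Pa^1,\infty)$ and with $S^1$), and shifts the target bigrading, so that we may assume $q\geq 0$ and $r:=p-2q\geq 0$. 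Iterating Lemma~\ref{dersmash} identifies $\Sigma^{p,q}E=S^r\wedge(\Pa^1)^q\wedge E$ with $S^r\wedge s_-^q E$ in $\SH(S)$, and Theorem~\ref{maps} then yields
\[
\SH(S)\bigl(\Sigma^\infty_{\Pa^1}(Y),\,S^r\wedge s_-^q E\bigr)=\colim_n\pi_r\,\Rmap_{\H_\bullet(S)}\bigl(\Sigma^n_{\Pa^1}(Y),E_{n+q}\bigr).
\]

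To conclude, each $\Sigma^n_{\Pa^1}(Y)$ lies in $\Spc^{ft}_\bullet$ and each $E_{n+q}$ is sectionwise countable by hypothesis, so Theorem~\ref{finsect} shows that every $\pi_r$ appearing on the right is countable (this is the point at which the positivity $r\geq 0$ matters), and a colimit indexed by $\N$ of countable sets remains countable. The main obstacle I anticipate is the bookkeeping in the middle paragraph: correctly matching the bigraded shift $(p,q)$ on the target with a level shift $s_-^q$ of $E$ together with a simplicial suspension $S^r$, so that Lemma~\ref{dersmash} and Theorem~\ref{maps} can be combined cleanly. Everything else is essentially formal once this bridge between the stable and unstable pictures is in place.
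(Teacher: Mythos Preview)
Your strategy is the paper's: reduce to $F=\Sigma^\infty_{\Pa^1}U_+$ via thickness, then combine Lemma~\ref{dersmash} with Theorem~\ref{maps} to express the stable hom as a filtered colimit of unstable homs to which Theorem~\ref{finsect} applies. The only genuine gap is in the displayed identity
\[
\SH(S)\bigl(\Sigma^\infty_{\Pa^1}(Y),\,S^r\wedge s_-^q E\bigr)=\colim_n\pi_r\,\Rmap_{\H_\bullet(S)}\bigl(\Sigma^n_{\Pa^1}(Y),E_{n+q}\bigr),
\]
which has the adjunction backwards: in the unstable category $\pi_r\,\Rmap_\bullet(A,B)=\H_\bullet(S)(S^r\wedge A,B)$, not $\H_\bullet(S)(A,S^r\wedge B)$, and stably $[\Sigma^\infty Y,\Sigma^r E']$ is $\pi_{-r}$ of the mapping spectrum, not $\pi_r$. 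So the positivity of $r$ does not produce a $\pi_r$ on the right. Applying Theorem~\ref{maps} to the levelwise spectrum $S^r\wedge s_-^qE$ actually gives
\[
\colim_n\,\H_\bullet(S)\bigl(\Sigma^n_{\Pa^1}(Y),\,S^r\wedge E_{n+q}\bigr)=\colim_n\pi_0\,\Rmap_{\H_\bullet(S)}\bigl(\Sigma^n_{\Pa^1}(Y),\,S^r\wedge E_{n+q}\bigr),
\]
and since $S^r$ is a finite simplicial set each $S^r\wedge E_{n+q}$ is still sectionwise countable, so Theorem~\ref{finsect} (with $n=0$) applies and your conclusion survives.

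The paper sidesteps this bookkeeping entirely by never leaving a simplicial suspension on the target: writing $(p,q)=(p'+q',q')-(2k,k)$ with $p',q',k\ge 0$, it moves $S^{p'}\wedge\G_m^{q'}$ to the \emph{source} (where it keeps $(X,x)$ of finite type via $S^1\simeq\A^1/(0\!\sim\!1)$) and leaves only $(\Pa^1,\infty)^{\wedge k}$ on the target, which Lemma~\ref{dersmash} converts to $s_-^k$. This is exactly the clean bridge you were worried about; your double smash with $(\Pa^1)^N\wedge S^M$ is a correct alternative once the $S^r$ is handled properly as above.
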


\begin{proof} It is clear that 
the class \[ \{ F'\in\SH(S)\, |\, \forall p,q\in\Z:E^{p,q}(F) \mbox{ is 
countable }\}\subseteq\SH(S)\] 
is a thick subcategory stable under $S^1\wedge -$ and $(\G_m,1)\wedge -$.\\
$\SH(S)^c\subseteq\SH(S)$ is the thick subcategory generated by $\Gh$
\cite[Theorem 2.1.3]{neemangrothendieck}, so we can assume that $F=
\Sigma^\infty_{\Pa^1}U_+$ for some $U\in Sm/S$.\\
Given $p,q\in\Z$, choose integers $p',q',k\ge 0$ such that 
$(p,q)=(p'+q',q')-(2k,k)$. Then 

\[ E^{-p,-q}(F)=\SH(S)(\Sigma^\infty_{\Pa^1}(
U_+\wedge S^{p'}\wedge\G_m^{q'}),\mathbb{L}((\Pa^1,\infty)\wedge -)^k(E)).\]

By Lemma \ref{dersmash}, there is a stable equivalence
$\mathbb{L}((\Pa^1,\infty)\wedge -)^k(E)\simeq (s_-)^k(E)=:E'$. Since 
$(X,x):=U_+\wedge S^{p'}\wedge\G_m^{q'}$ is isomorphic
in $\H_\bullet(S)$ to a space of finite type 
(because $S^1\simeq \A^1/(0\sim 1)$), Theorem \ref{maps}
implies that

\[ E^{-p,-q}(F)=\colim_n \HH_\bullet(S)(\Sigma^n_{\Pa^1}(X,x),E'_n).\]

Since $(X,x)\in L_\Sh\Mh$ is cofibrant, we have 

\[ X_n:=\Sigma^n_{\Pa^1}(X,x)=(\Pa^1,\infty)^{\wedge n}\wedge (X,x),\]

which again is isomorphic in $\H_\bullet(S)$ to a space of finite type.
Now, for every $n\ge 0$, $\HH_\bullet(S)(X_n,E'_n)$ is countable by Theorem
\ref{finsect}, hence so is $E^{-p,-q}(F)$.
\end{proof}

Finally, we can establish \cite[Proposition 5.5]{voevodsky} which makes
Brown representability available in $\A^1$-homotopy theory.

\begin{theorem}\label{brown} 
Let $S$ be a noetherian scheme of finite Krull dimension such 
that $Sm/S$ is countable.\\
Then the category $\SH(S)^c$ is countable.
\end{theorem}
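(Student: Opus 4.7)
The plan is to realize $\SH(S)^c$ as the $\omega$-step iterated closure of the generating set $\Gh=\{\Sigma^{p,q}\Sigma^\infty_{\Pa^1}U_+ \mid U\in Sm/S,\, p,q\in\Z\}$ under shifts, cones, finite direct sums, and splittings of idempotents, and to show by induction that countability is preserved at every stage. The set $\Gh$ is itself countable, because $Sm/S$ is countable and $\Z^2$ is countable. Moreover, for any two $G,G'\in\Gh$, the set $\SH(S)(G,G')$ is countable: each $\Sigma^{p,q}\Sigma^\infty_{\Pa^1}U_+$ is represented, up to stable equivalence, by a $\Pa^1$-spectrum with sectionwise countable levels (using that $U_+\wedge S^{p'}\wedge\G_m^{q'}$ is sectionwise countable and, for negative indices, invoking Lemma \ref{dersmash} together with Proposition \ref{stablefibrep}), whereupon Theorem \ref{stablefinsect} applies directly.

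Next I would define an ascending chain $\mathcal{C}_0\subseteq\mathcal{C}_1\subseteq\cdots$ of full small subcategories of $\SH(S)$ as follows. Let $\mathcal{C}_0$ have object set $\Gh$, and let $\mathcal{C}_{n+1}$ be the full subcategory whose objects consist of those of $\mathcal{C}_n$ together with (a) a chosen cone $C(f)$ for each morphism $f$ of $\mathcal{C}_n$, (b) all finite direct sums of objects of $\mathcal{C}_n$, (c) all shifts $\Sigma^{p,q}X$ of objects $X\in\mathcal{C}_n$ for $p,q\in\Z$, and (d) a chosen splitting of every idempotent endomorphism in $\mathcal{C}_n$ (such splittings exist in $\SH(S)$, which is idempotent complete as it has countable coproducts). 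Granting inductively that $\mathcal{C}_n$ has countably many objects with countable Hom sets, the new objects added at stage $n+1$ are parametrized by the countable data of morphisms, finite tuples, pairs $(p,q)\in\Z^2$, and idempotents, so $\mathcal{C}_{n+1}$ again has countably many objects.

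The inductive step for Hom sets proceeds by cases. For $X$ a new cone $C(f)$ with $f\colon A\to B$, the distinguished triangle $A\to B\to C(f)\to\Sigma A$ yields a five-term exact sequence sandwiching $\SH(S)(C(f),Y)$ and $\SH(S)(Y,C(f))$ between countable groups (since $A,B,\Sigma A\in\mathcal{C}_{n+1}$ and shifts are included in (c)); for $X$ a finite direct sum or a summand, the Hom is a finite product or subset of a countable Hom. Hence all Hom sets in $\mathcal{C}_{n+1}$ are countable. Setting $\mathcal{C}_\infty:=\bigcup_n\mathcal{C}_n$, this is a countable full subcategory of $\SH(S)$ which by construction is triangulated, idempotent complete, and contains $\Gh$; so it contains the thick subcategory generated by $\Gh$, which by \cite[Theorem 2.1.3]{neemangrothendieck} equals $\SH(S)^c$. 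Hence $\SH(S)^c$ is countable. The main obstacle is the retract step in the induction: one must know both that $\SH(S)$ admits splittings of idempotents and that those splittings inherit countable Hom sets — this is precisely what the direct-sum decomposition $\SH(S)(X,-)=\SH(S)(Y,-)\oplus\SH(S)(Z,-)$ for $X=Y\oplus Z$ provides.
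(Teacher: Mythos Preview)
Your proposal is correct and follows essentially the same approach as the paper: the paper invokes \cite[Proposition 2.3.5]{hps} for precisely the inductive closure construction you spell out, and for the base case applies Theorem~\ref{stablefinsect} directly to $E=\Sigma^\infty_{\Pa^1}V_+$ (whose levels $(\Pa^1,\infty)^{\wedge n}\wedge V_+$ are already sectionwise countable). Your detour through Lemma~\ref{dersmash} and Proposition~\ref{stablefibrep} to model $\Sigma^{p,q}\Sigma^\infty_{\Pa^1}V_+$ by a countable-level spectrum is harmless but unnecessary, since Theorem~\ref{stablefinsect} already absorbs the bigraded suspension on the target.
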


\begin{proof} According to the proof of
\cite[Proposition 2.3.5]{hps}, $SH(S)^c$ is equivalent to an increasing union of subcategories

\[ \Sh_0\subseteq\Sh_1\subseteq\ldots\subseteq\SH(S)^c\subseteq\SH(S)\]

such that $\Sh_0$ is the full subcategory spanned by the set of objects $\Gh$ and
for every $n\ge 0$, if $\Sh_n$ is countable, so is $\Sh_{n+1}$. It thus suffices to see that $\Sh_0$ is countable. Since $\Gh$ clearly is, this means we need
to show that for all $p,q\in\Z, U,V\in Sm/S$ the set 

\[ \SH(S)(\Sigma^{\infty}_{\Pa^1} U_+,\Sigma^{p,q}\Sigma^\infty_{\Pa^1} V_+)\] 

is countable, which is true by Theorem \ref{stablefinsect}
because for all $n\ge 0$, $E_n:=(\Sigma^\infty_{\Pa^1}V_+)_n=
(\Pa^1,\infty)^{\wedge n}\wedge V_+$ is sectionwise countable.
Indeed, for all $U\in Sm/S$, the (simplicial)
set $E_n(U)$ is a quotient of 

\[ [(Sm/S)(U,\Pa^1)]^n\times (Sm/S)(U,V)=\Pa^1(U)^n\times V(U)\]
which is countable since the category $Sm/S$ is.
\end{proof}

\end{document}